\newcolumntype{P}[1]{>{\centering\arraybackslash}p{#1}}
\newcolumntype{M}[1]{>{\centering\arraybackslash}m{#1}}
\newcommand{\xdownarrow}[1]{%
  {\left\downarrow\vbox to #1{}\right.\kern-\nulldelimiterspace}
}
\newcommand{\xuparrow}[1]{%
  {\left\uparrow\vbox to #1{}\right.\kern-\nulldelimiterspace}
}
\newtheorem{theorem}{Theorem}[section]
\newtheorem{definition}[theorem]{Definition}
\newtheorem{example}[theorem]{Example}
\newtheorem{lemma}[theorem]{Lemma}
\newtheorem{remark}[theorem]{Remark}
\newenvironment{proof}[1][Proof]{\noindent\textbf{#1.} }{\ \rule{0.5em}{0.5em}}
\title{\textbf{A formula to calculate the invariant $J$ of a quasi-homogeneous map germ}}
\author{ \ \ \ \\{O.N. Silva\footnote{O.N. Silva: Instituto de Matemáticas, Universidad Nacional Autónoma de México (UNAM), Cuernavaca, México. \hspace{5cm} e-mail: otoniel@im.unam.mx}}}
\date{}
\begin{document}

\maketitle

\begin{abstract}
In this work, we consider a quasi-homogeneous, corank $1$, finitely determined map germ $f$ from $(\mathbb{C}^2,0)$ to $(\mathbb{C}^3,0)$. We consider the invariants $m(f(D(f))$ and $J$, where $m(f(D(f))$ denotes the multiplicity of the image of the double point curve $D(f)$ of $f$ and $J$ denotes the number of tacnodes that appears in a stabilization of the transversal slice curve of $f(\mathbb{C}^2)$. We present formulas to calculate $m(f(D(f))$ and $J$ in terms of the weights and degrees of $f$.
\end{abstract}

\section{Introduction}

$ \ \ \ \ $ In this work, we consider a quasi-homogeneous, corank $1$, finitely determined map germ $f$ from $(\mathbb{C}^2,0)$ to $(\mathbb{C}^3,0)$. Local coordinates can be chosen so that these map germs can be written in the form $f(x,y)=(x, \tilde{p}(x,y),\tilde{q}(x,y))$, for some quasi-homogeneous function germs $\tilde{p}, \tilde{q} \in m_2^2$, where $m_2$ is the maximal ideal of the local ring of holomorphic function germs in two variables $\mathcal{O}_2$ (see Lemma \ref{lemma corank 1}).

In \cite{slice}, Nuño-Ballesteros and Marar studied the transversal slice of a corank $1$ map germ $f:(\mathbb{C}^2,0)\rightarrow(\mathbb{C}^3,0)$ (see \cite[Section 3]{slice}). They show in some sense that if a set of generic conditions are satisfied, then the transverse slice curve contains information on the geometry of $f$. They also introduced the invariants $C$, $T$ and $J$, which are defined as the number of cusps, triple points and tacnodes that appears in a stabilization of the transversal slice of $f$, respectively. They showed in \cite[Prop. 3.10]{slice} that the numbers $C$ and $T$ are the same as the numbers of cross-caps and triple points that appear in a stabilization of $f$, which are usually denoted by $C(f)$ and $T(f)$, respectively, and were defined by Mond in \cite{mond6}. On the other hand, the invariant $J$ is related to both, the delta invariant of the transverse slice curve of $f(\mathbb{C}^2)$ and $N(f)$ (see \cite[page 1388]{slice}), where $N(f)$ is the Mond's invariant defined in \cite{mond6}.

In \cite{mondformulas}, Mond presented formulas to calculate the invariants $C(f)$, $T(f)$ and $\mu(D(f))$ of a quasi-homogeneous finitely determined map germ of any corank in terms of the weights and degrees of $f$, where $\mu(D(f))$ denotes the Milnor number of the double point curve $D(f)$ of $f$ (see Section \ref{double point sec} for the definition of $D(f)$ and Th. \ref{mondformulas}). So, a natural question is:\\

\noindent \textbf{Question 1:} Let $f:(\mathbb{C}^2,0)\rightarrow (\mathbb{C}^3,0)$ be a quasi-homogeneous, corank $1$, finitely determined map germ. Can the invariant $J$ be calculated in terms of the weights and the degrees of $f$?\\ 

It follows by Propositions 3.5 and 3.10 and Corollary 4.4 of \cite{slice} that  

\begin{center}
$J=\dfrac{1}{2}\bigg(\mu(D(f))-C(f)-1\bigg)-3T(f)+m(f(D(f))$,
\end{center}

\noindent where $m(f(D(f))$ denotes the multiplicity of the image of the double point curve $D(f)$. So, using Mond's formulas for $C(f)$, $T(f)$ and $\mu(f(D(f))$ (Th. \ref{mondformulas}) we conclude that Question $1$ has a positive answer if and only if there is a formula to calculate the invariant $m(f(D(f))$ in terms of the weights and the degrees of $f$. So, another natural question is:\\

\noindent \textbf{Question 2:} Let $f$ be as in Question $1$. Can the invariant $m(f(D(f))$ be calculated in terms of the weights and degrees of $f$?\\ 

In \cite[Proposition 6.2]{otoniel1}, Ruas and the author provided answers to both questions above in the case where $f$ is homogeneous. In this work, using a normal form for $f$ (Lemma \ref{lemma corank 1}), we present a positive answer for both questions without any restriction on the weights and degrees of $f$. More precisely, we present in Theorem \ref{main theo} formulas to calculate both invariants, $m(f(D(f))$ and $J$, in terms of the weights and the degrees of $f$.  We finish this work by calculating the invariants $m(f(D(f))$ and $J$ in some examples to illustrate our formulas.

\section{Preliminaries}

$ \ \ \ \ $ Throughout this paper, given a finite map $f:\mathbb{C}^2\rightarrow \mathbb{C}^3$, $(x,y)$ and $(X,Y,Z)$ are used to denote systems of coordinates in $\mathbb{C}^2$ (source) and $\mathbb{C}^3$ (target), respectively. Also, $\mathbb{C} \lbrace x_1,\cdots,x_n \rbrace \simeq \mathcal{O}_n$ denotes the local ring of convergent power series in $n$ variables. The letters $U,V$ and $W$ are used to denote open neighborhoods of $0$ in $\mathbb{C}^2$, $\mathbb{C}^3$ and $\mathbb{C}$, respectively. We also use the standard notation of singularity theory as the reader can find in Wall's survey paper \cite{wall}.

\subsection{Double point curves for corank 1 map germs}\label{double point sec}

$ \ \ \ \ $ In this section, we deal only with of corank $1$ maps from $\mathbb{C}^2$ to $\mathbb{C}^3$. For the general definition of double point spaces, see for instance \cite[Section 1]{ref7}, \cite[Section 2]{ref9} and \cite[Section 3]{mond6}. 

Consider a finite and holomorphic map $f: U\rightarrow \mathbb{C}^3 $, where $U$ is an open neighbourhood of $0$ in $\mathbb{C}^2$. The double point space of $f$, denoted by $D(f)$, is defined (as a set) by

 \begin{center}
$D(f):=\lbrace (x,y) \in U \ : \ f^{-1}(f(x,y))\neq \lbrace (x,y)\rbrace \rbrace \cup \Sigma(f)$,
 \end{center}

\noindent where $\Sigma(f)$ is the ramification set of $f$. We also consider the lifting of the $D(f)$ in $U \times U$, denoted by $D^2(f)$, given by the pairs $((x,y),(x^{'},y^{'}))$ such that either $f(x,y)=f(x^{'},y^{'})$ with $(x,y) \neq (x^{'},y^{'})$ or $(x,y)=(x^{'},y^{'})$ with $(x,y) \in \Sigma(f)$.

We need to choose convenient analytic structures for the double point space $D(f)$ and the lifting of the double point space $D^2(f)$. As we said in Introduction, when $f$ has corank $1$, local coordinates can be chosen so that these map germs can be written in the form $f(x,y)=(x, \tilde{p}(x,y),\tilde{q}(x,y))$, for some function germs $\tilde{p}, \tilde{q} \in m_2^2$, where $m_2$ is the maximal ideal of $\mathcal{O}_2$. In this case, we define \textit{the lifting of the double point space} $D^2(f)$, (as a complex space) by

\begin{center}
$D^2(f)=V \displaystyle \left( x-x^{'},\dfrac{\tilde{p}(x,y)-\tilde{p}(x,y^{'})}{y-y^{'}}, \dfrac{\tilde{q}(x,y)-\tilde{q}(x,y^{'})}{y-y^{'}} \right)$
\end{center}

\noindent where $(x,y,x^{'},y^{'})$ are coordinates of $\mathbb{C}^2 \times \mathbb{C}^2$.\\

Once the lifting $D^2(f) \subset U \times U$ is defined as a complex analytic space, we now consider its image $D(f)$ (also as a complex analytic space) on $U$ by the projection 

\begin{center}
$\pi: U \times U \rightarrow U$ 
\end{center}

\noindent onto the first factor, which will be considered with the structure given by Fitting ideals. We also consider the double point space in the target, that is, the image of $D(f)$ by $f$, denoted by $f(D(f))$, which will also be consider with the structure given by Fitting ideals. 

We remark that given a finite morphism of complex spaces $h:X\rightarrow Y$ the push-forward $h_{\ast}\mathcal{O}_{X}$ is a coherent sheaf of $\mathcal{O}_{Y}-$modules (see \cite[Chapter 1]{grauert}) and to it we can (as in \rm\cite[Section 1]{ref13}) associate the Fitting ideal sheaves $\mathcal{F}_{k}(h_{\ast}\mathcal{O}_{X})$. Notice that the support of $\mathcal{F}_{0}(h_{\ast}\mathcal{O}_{X})$ is just the image $h(X)$. Analogously, if $h:(X,x)\rightarrow(Y,y)$ is a finite map germ then we denote by $ F_{k}(h_{\ast}\mathcal{O}_{X})$ the \textit{k}th Fitting ideal of $\mathcal{O}_{X,x}$ as $\mathcal{O}_{Y,y}-$module. In this way, we have the following definition.

\begin{definition} Let $f:U \rightarrow V$ be a finite mapping, where $U$ and $V$ are open neighbourhoods of $0$ in $\mathbb{C}^2$ and $\mathbb{C}^3$, respectively.\\

\noindent (a) Let ${\pi}|_{D^2(f)}:D^2(f) \subset U \times U \rightarrow U$ be the restriction to $D^2(f)$ of the projection $\pi$. The \textit{double point space of $f$} is the complex space

\begin{center}
$D(f)=V(\mathcal{F}_{0}({\pi}_{\ast}\mathcal{O}_{D^2(f)}))$.
\end{center}

\noindent Set theoretically we have the equality $D(f)=\pi(D^{2}(f))$.\\

\noindent (b) The \textit{double point space of $f$ in the target} is the complex space $f(D(f))=V(\mathcal{F}_{1}(f_{\ast}\mathcal{O}_2))$. Notice that the underlying set of $f(D(f))$ is the image of $D(f)$ by $f$.\\ 

\noindent (c) Given a finite map germ $f:(\mathbb{C}^{2},0)\rightarrow (\mathbb{C}^3,0)$, \textit{the germ of the double point space of $f$} is the germ of complex space $D(f)=V(F_{0}(\pi_{\ast}\mathcal{O}_{D^2(f)}))$. \textit{The germ of the double point space of $f$ in the target} is the germ of the complex space $f(D(f))=V(F_{1}(f_{\ast}\mathcal{O}_2))$. 
\end{definition}

\begin{remark} If $f:U \subset \mathbb{C}^2 \rightarrow V \subset \mathbb{C}^3 $ is finite and generically $1$-to-$1$, then $D^2(f)$ is Cohen-Macaulay and has dimension $1$ (see \rm\cite[\textit{Prop. 2.1}]{ref9}\textit{). Hence, $D^2(f)$, $D(f)$ and $f(D(f))$ are complex analytic curves. In this case, without any confusion, we also call these complex spaces by the ``lifting of the double point curve'', the ``double point curve'' and the ``image of the double point curve'', respectively.}
\end{remark}

\subsection{Finite determinacy and the invariants $C(f)$ and $T(f)$}

\begin{definition}\label{def a equi}(a) Two map germs $f,g:(\mathbb{C}^2,0)\rightarrow (\mathbb{C}^3,0)$ are $\mathcal{A}$-equivalent, denoted by $g\sim_{\mathcal{A}}f$, if there exist map germs of diffeomorphisms $\eta:(\mathbb{C}^2,0)\rightarrow (\mathbb{C}^2,0)$ and $\xi:(\mathbb{C}^3,0)\rightarrow (\mathbb{C}^3,0)$, such that $g=\xi \circ f \circ \eta$.\\

\noindent (b) A map germ $f:(\mathbb{C}^2,0) \rightarrow (\mathbb{C}^3,0)$ is finitely determined (or $\mathcal{A}$-finitely determined) if there exists a positive integer $k$ such that for any $g$ with $k$-jets satisfying $j^kg(0)=j^kf(0)$ we have $g \sim_{\mathcal{A}}f$. 

\end{definition}

Consider a finite map germ $f:(\mathbb{C}^2,0)\rightarrow (\mathbb{C}^3,0)$. By Mather-Gaffney criterion (\rm\cite[Theorem 2.1]{wall}), $f$ is finitely determined if and only if there is a finite representative $f:U \rightarrow V$, where $U\subset \mathbb{C}^2$, $V \subset \mathbb{C}^3$ are open neighbourhoods of the origin, such that $f^{-1}(0)=\lbrace 0 \rbrace$ and the restriction $f:U \setminus \lbrace 0 \rbrace \rightarrow V \setminus \lbrace 0 \rbrace$ is stable.

This means that the only singularities of $f$ on $U \setminus \lbrace 0 \rbrace$ are cross-caps (or Whitney umbrellas), transverse double and triple points. By shrinking $U$ if necessary, we can assume that there are no cross-caps nor triple points in $U$. Then, since we are in the nice dimensions of Mather (\rm\cite[p. 208]{mather}), we can take a stabilization of $f$, 

\begin{center}
$F:U \times D \rightarrow \mathbb{C}^4$, $F(z,s)=(f_{s}(z),s)$, 
\end{center}

\noindent where $D$ is a neighbourhood of $0$ in $\mathbb{C}$. 

\begin{definition} We define

\begin{center}
$T(f):= \sharp $ of triple points of $f_s$ $ \ \ \ $ and  $ \ \ \ $ $C(f):= \sharp $ of cross-caps of $f_s$,
\end{center}

\noindent where $s\neq 0$.

\end{definition}

It is well known that the numbers $T(f)$ and $C(f)$ are independent of the particular choice of the stabilization and they are also analytic invariants of $f$ (see for instance \rm\cite{mond7}).

We remark that the space $D(f)$ plays a fundamental role in the study of the finite determinacy. In \cite[Theorem 2.14]{ref7}, Marar and Mond presented necessary and sufficient conditions for a map germ $f:(\mathbb{C}^n,0)\rightarrow (\mathbb{C}^p,0)$ with corank $1$ to be finitely determined in terms of the dimensions of $D^2(f)$ and other multiple points spaces. When $(n,p)=(2,3)$, in \cite{ref9}, Marar, Nu\~{n}o-Ballesteros and Pe\~{n}afort-Sanchis extended this criterion of finite determinacy to the corank $2$ case. More precisely, they proved the following result.

\begin{theorem}\rm(\cite[\textit{Corollary 3.5}]{ref9})\label{criterio} \textit{
Let $f:(\mathbb{C}^2,0)\rightarrow(\mathbb{C}^{3},0)$ be a finite and generically $1$-to-$1$ map germ. Then $f$ is finitely determined if and only if $\mu(D(f))$ is finite (equivalently, $D(f)$ is a reduced curve).}
\end{theorem}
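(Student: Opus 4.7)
The plan is to reduce the statement to the Mather-Gaffney geometric criterion for finite determinacy and then analyze the local stable multi-germs of a map $(\mathbb{C}^2,0)\to(\mathbb{C}^3,0)$. First I would invoke the criterion recalled just before the theorem to replace ``$f$ is finitely determined'' by the existence of a finite representative $f:U\to V$ whose restriction $f:U\setminus\{0\}\to V\setminus\{0\}$ is stable. A preliminary observation is that the parenthetical equivalence ``$\mu(D(f))<\infty\Leftrightarrow D(f)$ reduced'' is a general fact about curve germs: once one knows (from Prop.~2.1 of \cite{ref9}) that $D^{2}(f)$ is Cohen-Macaulay of pure dimension $1$, the finite projection $\pi|_{D^2(f)}$ forces $D(f)$ to be a curve, and a curve germ has finite Milnor number exactly when it is reduced. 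So the real content is the equivalence between finite determinacy and reducedness of $D(f)$.

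For the forward implication I would work on $U\setminus\{0\}$. Since $f$ is stable there and we are in Mather's nice dimensions, the local multi-germs are immersions, cross-caps, transverse double points and transverse triple points. In each of these models the Fitting ideal defining $D(f)$ cuts out a reduced $1$-dimensional germ (smooth at an immersive double branch, a node at a triple point, a cuspidal branch at a cross-cap). Since reducedness is a local property, $D(f)$ is reduced on $U\setminus\{0\}$, so as a germ at the origin its non-reduced locus lies in $\{0\}$; a pure $1$-dimensional germ with $0$-dimensional non-reduced locus is reduced.

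For the converse I would assume $D(f)$ is reduced and show that the only instability of $f$ lies at $0$, so that Mather-Gaffney applies. Using again that $D^2(f)$ is Cohen-Macaulay of pure dimension $1$ and that $\pi|_{D^2(f)}$ is finite, reducedness of $D(f)$ transfers to reducedness of $D^2(f)$ off $\pi^{-1}(0)$. On $U\setminus\{0\}$ the fibres of $f$ contain at most finitely many points, giving rise to mono- and bi-germs whose source and target tangent data are encoded by the local structure of $D^2(f)$; reducedness rules out tangencies, collisions of more than three branches, and non-versal cross-cap/crease configurations, forcing each such multi-germ into the stable list. Thus $f$ is stable on $U\setminus\{0\}$ and hence finitely determined.

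The main obstacle, which is precisely the novelty with respect to the classical corank~$1$ criterion of Marar-Mond \cite{ref7}, is the converse direction at corank~$2$ points. In corank~$1$ one has the explicit parametric description of $D^2(f)$ displayed earlier in the excerpt, which reduces the analysis of reducedness to a transparent computation in coordinates; at corank~$2$ no such normal form is available and one must argue intrinsically, using only the Fitting ideal structure and the Cohen-Macaulayness of $D^2(f)$. The delicate step I would expect to spend the most effort on is proving that reducedness of $D(f)$ indeed propagates to $D^{2}(f)$ and excludes the non-transverse stable-looking configurations that could otherwise occur at an isolated corank~$2$ point on $U\setminus\{0\}$.
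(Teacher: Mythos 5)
First, note that the paper does not prove this statement: it is quoted verbatim from Corollary 3.5 of \cite{ref9} (Marar, Nu\~{n}o-Ballesteros and Pe\~{n}afort-Sanchis), so there is no in-paper argument to compare yours against. Judged on its own, your outline follows the same global strategy as the cited proof (Mather--Gaffney plus a local analysis of the multi-germs of a representative away from the origin), but it has two genuine gaps. The first is the step ``a pure $1$-dimensional germ with $0$-dimensional non-reduced locus is reduced'': this is false for a general $1$-dimensional germ; for instance $V(x^2,xy)\subset(\mathbb{C}^2,0)$ is generically reduced with non-reduced locus $\lbrace 0\rbrace$ but carries an embedded point at the origin. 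What saves the argument is that $D(f)$ is a hypersurface: since $D^2(f)$ is Cohen--Macaulay of dimension $1$ and $\pi$ is finite, $\pi_{\ast}\mathcal{O}_{D^2(f)}$ admits a square presentation matrix over $\mathcal{O}_2$, so $\mathcal{F}_0$ is generated by its determinant $\lambda$ and $D(f)=V(\lambda)$ is a plane curve with no embedded components. This fact is also what makes $\mu(D(f))$ well defined and what gives the parenthetical equivalence with reducedness; you need to state and use it rather than only invoking that $D(f)$ is ``a curve''.

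The second, more serious, gap is the converse. The sentence ``reducedness rules out tangencies, collisions of more than three branches, and non-versal cross-cap/crease configurations, forcing each such multi-germ into the stable list'' is a restatement of the theorem at the points of $U\setminus\lbrace 0\rbrace$, not a proof of it. The substantive content of \cite{ref9} is an intermediate characterization of stability of a (multi-)germ in terms of $D(f)$ (a mono-germ is stable if and only if $D(f)$ is smooth or empty, with a normal-crossings version for multi-germs), proved by comparing $D(f)$ with the structure of a versal unfolding; the corollary then follows because a reduced plane curve germ $V(\lambda)$ is smooth, respectively has only normal crossings, at every point of a punctured neighbourhood of $0$, and conversely. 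Without that intermediate theorem --- precisely the corank-$2$ analysis you flag as the delicate step but do not carry out --- the equivalence is not established. (A small slip besides: at a cross-cap $(x,y)\mapsto(x,y^2,xy)$ one has $D(f)=V(x)$, which is smooth, not a cuspidal branch.)
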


\subsection{Identification and Fold components of $D(f)$}

$ \ \ \ \ $ When $f:(\mathbb{C}^2,0)\rightarrow (\mathbb{C}^3,0)$ is finitely determined, the restriction of a representative of $f$ to $D(f)$ is finite. In this case, $f_{|D(f)}$ is generically $2$-to-$1$ (i.e; $2$-to-$1$ except at $0$). On the other hand, the restriction of $f$ to an irreducible component $D(f)^i$ of $D(f)$ can be generically $1$-to-$1$ or $2$-to-$1$. This motivates us to give the following definition which is from \cite[Def. 4.1]{otoniel3} (see also \cite[Def. 2.4]{otoniel1} and \cite[Sec. 3]{otoniel4}).

\begin{definition}\label{typesofcomp} Let $f:(\mathbb{C}^2,0)\rightarrow (\mathbb{C}^3,0)$ be a finitely determined map germ. Let $f:U\rightarrow V$ be a representative, where $U$ and $V$ are neighbourhoods of $0$ in $\mathbb{C}^2$ and $\mathbb{C}^3$, respectively. Consider an irreducible component $D(f)^j$ of $D(f)$.\\

\noindent (a) If the restriction ${f_|}_{D(f)^j}:D(f)^j\rightarrow V$ is generically $1$-to-$1$, we say that $D(f)^j$ is an \textit{identification component of} $D(f)$.\\ 

In this case, there exists an irreducible component $D(f)^i$ of $D(f)$, with $i \neq j$, such that $f(D(f)^j)=f(D(f)^i)$. We say that $D(f)^i$ is the \textit{associated identification component to} $D(f)^j$ or that the pair $(D(f)^j, D(f)^i)$ is a \textit{pair of identification components of} $D(f)$.  \\

\noindent (b) If the restriction ${f_|}_{D(f)^j}:D(f)^j\rightarrow V$ is generically $2$-to-$1$, we say that $D(f)^j$ is a \textit{fold component of} $D(f)$.\\

\noindent (c) We define the sets $IC(D(f))=\lbrace $identification components of $D(f) \rbrace $ and  $FC(D(f))= \lbrace  $fold components of $D(f) \rbrace $. And we define the numbers $r_i(D(f)):=\sharp IC(D(f))$ and $r_f(D(f)):= \sharp FC(D(f))$.  
\end{definition}

\begin{remark} Let $f$ and $g$ be finitely determined map germs from $(\mathbb{C}^2,0)$ to $(\mathbb{C}^3,0)$. Suppose that $g\sim_{\mathcal{A}}f$ and write $g=\xi \circ f \circ \eta$ as in Definiton \rm\ref{def a equi}. \textit{Consider representatives $f,g:U\rightarrow V$ of $f$ and $g$. Let $D(g)^i$ be an irreducible component of $D(g)$ and consider its corresponding image by $\eta$, $D(f)^i:=\eta(D(g)^i)$, which is an irreducible component of $D(f)$. Note that , ${f_|}_{D(f)^j}:D(f)^i\rightarrow V$ is generically $k$-to-$1$ if and only if ${g_|}_{D(g)^i}:D(g)^i\rightarrow V$ is generically $k$-to-$1$, where $k=1,2$. Hence}

\begin{center}
$r_i(D(f))=r_i(D(g))$ $ \ \ \ $ \textit{and} $ \ \ \ $ $r_f(D(f))=r_f(D(g))$.
\end{center}

\end{remark}

The following example illustrates the two types of irreducible components of $D(f)$ presented in Definition \ref{typesofcomp}.

\begin{example}\label{example}
\textit{Let $f(x,y)=(x,y^2,xy^3-x^7y)$ be the singularity $C_7$ of Mond's list} \rm(\cite[\textit{p.378}]{mond6}). \textit{In this case, $D(f)=V(xy^2-x^7)$. Then $D(f)$ has three irreducible components given by}

\begin{center}
$D(f)^1=V(y-x^3), \ \ \ $ $D(f)^2=V(y+x^3) \ \ $ and $ \ \ D(f)^3=V(x)$. 
\end{center}

\textit{Notice that $(D(f)^1$, $D(f)^2)$ is a pair of identification components and $D(f)^3$ is a fold component. Hence, we have that $r_i(D(f))=2$ and $r_f(D(f))=1$. We have also that $f(D(f)^3)=V(X,Z)$ and $f(D(f)^1)=f(D(f)^2)=V(Y-X^6,Z)$ (see Figure \rm\ref{figura1}).}

\begin{figure}[h]
\centering
\includegraphics[scale=0.3]{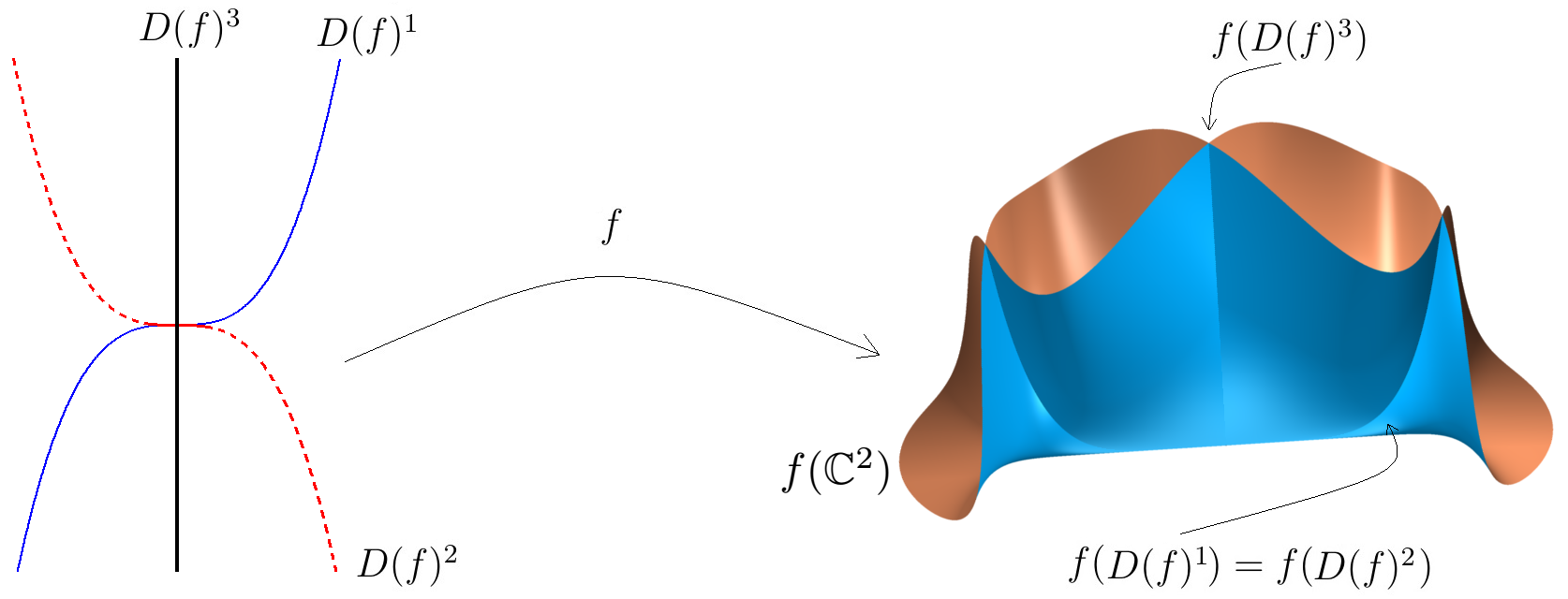} 
\caption{Identification and fold components of $D(f)$ (real points)}\label{figura1}
\end{figure}

\end{example}

\begin{remark} In the Example \rm\ref{example}\textit{, we have made use of the software Surfer }\rm\cite{surfer}\textit{.} 
\end{remark}

\subsection{Quasi-homogeneous map germs and Mond's formulas}\label{quasi-hom sec}

\begin{definition} A polynomial $p(x_1,\cdots,x_n)$ is \textit{quasi-homogeneous} if there are positive integers $w_1,\cdots,w_n$, with no common factor and an integer $d$ such that $p(k^{w_1}x_1,\cdots,k^{w_n}x_x)=k^dp(x_1,\cdots,x_n)$. The number $w_i$ is called the weight of the variable $x_i$ and $d$ is called the weighted degree of $p$. We also write $w(p)$ to denote the weighted degree of $p$. In this case, we say $p$ is of type $(d; w_1,\cdots,w_n)$.
\end{definition}

This definition extends to polynomial map germs $f:(\mathbb{C}^n,0)\rightarrow (\mathbb{C}^p,0)$ by just requiring each coordinate function $f_i$ to be quasi-homogeneous of type $(d_i; w_1,\cdots,w_n)$. In particular, when $f:(\mathbb{C}^2,0)\rightarrow (\mathbb{C}^3,0)$ is quasi-homogeneous, we say that $f$ is quasi-homogeneous of type $(d_1,d_2,d_3; a,b)$, where here we change the classical notation $w_1,w_2$ of the weights of $x$ and $y$ by $a,b$, for simplicity.

The following lemma describes a normal form for a class of quasi-homogeneous map germs from $(\mathbb{C}^2,0)$ to $(\mathbb{C}^3,0)$ which will be considered in this work.

\begin{lemma}(Normal form lemma)\label{lemma corank 1} Let $g(x,y)=(g_1(x,y),g_2(x,y),g_3(x,y))$ be a quasi-homogeneous, corank $1$, finitely determined map germ of type $(d_1,d_2,d_3; a,b)$. Then $g$ is $\mathcal{A}$-equivalent to a quasi-homogeneous map germ $f$ with type $(d_{i_1}=a,d_{i_2},d_{i_3};a,b)$, which is written as

\begin{equation}\label{eq12}
f(x,y)=(x, y^n+xp(x,y), \alpha y^m+ xq(x,y)),
\end{equation}

\noindent for some integers $n,m\geq 2$, $\alpha \in \mathbb{C}$, $p,q \in \mathcal{O}_2$, $p(x,0)=q(x,0)=0$, where $(d_{i_1},d_{i_2},d_{i_3})$ is a permutation of $(d_1,d_2,d_3)$ such that $d_{i_2}\leq d_{i_3}$.
\end{lemma}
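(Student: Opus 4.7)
The plan is to construct the normal form through a sequence of $\mathcal{A}$-equivalences, each of which preserves quasi-homogeneity with the prescribed weights $(a,b)$.

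First, I exploit the corank-one hypothesis. The linear part of each $g_i$ is a multiple of $x$ (when $d_i = a$), a multiple of $y$ (when $d_i = b$), or zero otherwise, and $dg(0)$ has rank $1$. Using a grading-preserving linear change of the target (combining only components of equal weighted degree), I arrange that exactly one component $g_{i_1}$ carries a nonzero linear part. After swapping the names of $x$ and $y$ if necessary, this linear form is a nonzero multiple of $x$, forcing $d_{i_1} = a$.

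Second, I straighten $g_{i_1}$ to $x$. Writing $g_{i_1} = c\,x + h(x,y)$ with $h \in m_2^2$ weight-homogeneous of degree $a$, the coprimality $\gcd(a,b) = 1$ forces $h$ to be either zero or a scalar multiple of $y^{a/b}$ (the latter only when $b \mid a$, hence $b = 1$). In either case the source change $(x,y) \mapsto (g_{i_1}(x,y),\, y)$ is a quasi-homogeneous local diffeomorphism (its Jacobian at the origin has top-left entry $c \neq 0$) that converts $g_{i_1}$ into $x$.

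Third, with $g_{i_1} = x$, the remaining two components lie in $m_2^2$ and decompose uniquely as
\begin{equation*}
g_{i_2} = \beta\, y^n + x\, P(x,y), \qquad g_{i_3} = \gamma\, y^m + x\, Q(x,y),
\end{equation*}
by isolating the pure-$y$ monomial (which is present only when $b$ divides the corresponding weighted degree). By weighted homogeneity the only pure $x$-monomial that can appear in $g_{i_2}$ is the unique $\lambda\, x^{d_{i_2}/a}$ (when $a \mid d_{i_2}$), and it is killed by the grading-preserving target change $Y \mapsto Y - \lambda\, X^{d_{i_2}/a}$; the analogous change handles $g_{i_3}$. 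After these modifications, $P(x,0) = Q(x,0) = 0$.

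Finally, I use finite determinacy to see that at least one of $\beta, \gamma$ must be nonzero, for otherwise $g$ would collapse the whole axis $\{x = 0\}$ to the origin, contradicting the Mather--Gaffney condition $g^{-1}(0) = \{0\}$. Relabeling so that $f_{i_2}$ is the component carrying a nonzero pure $y^n$ term (and, when both components carry such a term, picking the smaller-degree one to enforce $d_{i_2} \leq d_{i_3}$), and then rescaling $y \mapsto \beta^{-1/n}\, y$ to normalize the coefficient of $y^n$ to $1$, produces $f = (x,\, y^n + x\, p,\, \alpha y^m + x\, q)$ with $p(x,0) = q(x,0) = 0$ and $\alpha \in \mathbb{C}$. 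The bound $n, m \geq 2$ follows because the second and third components have vanishing linear parts. The main bookkeeping obstacle throughout is verifying that every change of source and target used remains quasi-homogeneous with the original weights $(a,b)$ and consistent degrees; this is what dictates both the case analysis in the second paragraph and the precise form of the $X^{d_{i_2}/a}$-cancellations in the third.
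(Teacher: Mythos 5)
Your construction follows the same route as the paper (straighten the degree-$a$ component to $x$, decompose the other two components as pure $y$-power plus a multiple of $x$, and use finiteness to see that at least one pure $y$-power survives), and all of those steps are sound. But there is a genuine gap in your final relabeling step. The lemma demands two things simultaneously: the \emph{second} component must carry the nonzero monomial $y^n$ (its coefficient is normalized to $1$, while the third component's coefficient $\alpha$ may vanish), \emph{and} $d_{i_2}\leq d_{i_3}$. You order by ``which component has a nonzero pure $y$-power'' first and only break ties by degree. In the case where exactly one of $\beta,\gamma$ is nonzero, you place that component second without checking that its weighted degree is the smaller one, and this is not automatic: a priori one could have $\tilde g=(x,\,x^{k}y,\,y^{v_2}+xp_2)$ with $w(x^{k}y)=ka+b<v_2b$, so that putting the $y^{v_2}$-component second would violate $d_{i_2}\leq d_{i_3}$.

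Ruling this out is the real content of the hardest part of the argument, and it cannot be done by bookkeeping alone. One must first use finite determinacy to show that when $\beta=0$ the surviving power is $v_2=2$ (the restriction to $V(x)$ is $v_2$-to-$1$, and a finitely determined singular germ forces $v_2=2$), and then show $w(xp_1)\geq 2b$. For monomials $x^{k_i}y^{s_i}$ of $xp_1$ with $s_i\geq 2$ this is immediate, but when all $s_i=1$ one is reduced to $\tilde g=(x,\,x^{k}y,\,y^2+xp_2)$, and one needs the reducedness of $D^2(\tilde g)=V\bigl(x-x',\,x^{k},\,y+y'+x(\cdots)\bigr)$ (equivalently, the finite determinacy criterion of Marar--Nu\~no-Ballesteros--Pe\~nafort-Sanchis) to force $k=1$; the germ is then the stable cross-cap $(x,y^2,xy)$, for which the two degrees coincide and the ordering holds. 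Without this analysis your proof does not establish the inequality $d_{i_2}\leq d_{i_3}$ in the single-nonzero-coefficient case, and that inequality is used downstream (e.g.\ in Lemma \ref{lemma aux4} and Theorem \ref{main theo}, where $c=\min\{a,d_2\}$ and the formulas are written in terms of $d_2\leq d_3$).
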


\begin{proof} Since $g$ has corank $1$, $g_i$ is a regular in $x$ or $y$ for some $i$. Without lose of generality, suppose that $g_1$ is regular in $x$. Thus $g_1(x,y)=\gamma x + g^{'}_1(x,y)$, where $\gamma \in \mathbb{C}$, $\gamma \neq 0$. We have that $g$ is quasi-homogeneous, this implies that $g^{'}_1(x,y)=\theta y^a$, where $\theta \in \mathbb{C}$. Also, if $\theta\neq 0$, then $b=1$. Consider the analytic isomorphisms $\eta: (\mathbb{C}^2,0)\rightarrow (\mathbb{C}^2,0)$ and $\xi:(\mathbb{C}^3,0)\rightarrow (\mathbb{C}^3,0)$ defined by 

\begin{center}
$\eta(x,y)=(x-(\gamma^{-1}\theta) y^a,y)$ $ \ \ \ $ and $ \ \ \ $ $\xi(X,Y,Z)=(\gamma^{-1}X,Y,Z)$.
\end{center}

Note that the map $\tilde{g}:=\xi \circ g \circ \eta$ is a quasi-homogeneous map germ of type $(d_{1},d_{2},d_{3};a,b)$. There exist integers $v_1,v_2$, complex numbers $\alpha_1,\alpha_2$ and polynomials $p_1,p_2 \in \mathcal{O}_2$ such that $\tilde{g}$ is written as
 
\begin{center}
$\tilde{g}(x,y)=(x,\alpha_1 y^{v_1} + xp_1(x,y), \alpha_2 y^{v_2}+xp_2(x,y))$,
\end{center}

\noindent with $2\leq v_1,v_2$. After a change of coordinates (which does not change the quasi-homogeneous type of $\tilde{g}$), we can assume that $p_1(x,0)=p_2(x,0)=0$. So, we need to show that $\alpha_i \neq 0$ for some $i$ with $w(\alpha_iy^{v_i}+xp_i(x,y))\leq w(\alpha_jy^{v_j}+xp_j(x,y))$, where $i\neq j$ and $i,j \in \lbrace 1,2\rbrace$. Since $\tilde{g}$ is also finitely determined, in particular, it is finite, so either $\alpha_1 \neq 0$ or $\alpha_2 \neq 0$. Thus we have three cases.\\

\noindent (1) $\alpha_1, \alpha_2 \neq 0$.\\
\noindent (2) $\alpha_1 =0$ and $\alpha_2 \neq 0$.\\ 
\noindent (3) $\alpha_1 \neq 0$ and $\alpha_2 =0$.\\

(1) Suppose that $\alpha_1,\alpha_2 \neq 0$. In this case, we can suppose that $\alpha_1=\alpha_2=1$ (applying the change of coordinates $(X,Y,Z)\mapsto (X,\alpha_1^{-1}Y,\alpha_2^{-1}Z)$). Define $n:=min \lbrace v_1,v_2 \rbrace$ and $p:=p_i$ if $n=v_i$. Define also $m:=v_1$ and $q:=p_1$ if $n=v_2$ or $m:=v_2$ and $q:=p_2$ if $n=v_1$. So, $\tilde{g}$ is $\mathcal{A}$-equivalent to $f(x,y)=(x,y^n+xp(x,y),y^m+xq(x,y))$. Note that $w(y^n+xp(x,y)\leq w(y^m+xq(x,y))$.\\  

(2) If $\alpha_1 = 0$ and $\alpha_2 \neq 0$, then the restriction of $\tilde{g}$ to $V(x)$ is $v_2$-to-$1$. Since $\tilde{g}$ if finitely determined and singular, we have that $v_2=2$. In this case, $\tilde{g}(x,y)=(x,xp_1(x,y),\alpha_2 y^2+xp_2(x,y))$. Again, after a change of coordinates, we can assume that $\alpha_2=1$. Write $x p_1(x,y)= \lambda_1 x^{k_1}y^{s_1}+\cdots+\lambda_l x^{k_l}y^{s_l}$.\\ 

\noindent \textbf{Statement:} We have that $w(y^2+xp_2(x,y)) \leq w(xp_1(x,y))$.\\

\textit{Proof of Statement:} Note that $p_1(x,y)\not\equiv 0$, otherwise the set of cross-caps $C(\tilde{g})$ of $\tilde{g}$ is not finite and hence $\tilde{g}$ is not finitely determined. Since $p_1(x,0)=0$, we have that $s_i\geq 1$ for all $i$. If $s_i\geq 2$ for some $i$, then the statement is clear and after a change of coordinates, we see that $\tilde{g}$ is $\mathcal{A}$-equivalent to

\begin{center}
$f(x,y)=(x,y^2+xp(x,y),xq(x,y))$,
\end{center}

\noindent where $p=p_2$, $q=p_1$ and $w(y^2+xp(x,y))\leq w(xq(x,y))$, as desired.

Now, suppose that $s_i=1$ for all $i$. In this case, after a change of coordinates, we can write $\tilde{g}$ as $\tilde{g}(x,y)=(x,x^{k}y,y^2+xp_2(x,y))$, for some $k\geq 1$. We have that $D^2(\tilde{g})=V(x-x^{'}, \ x^{k} \ , y+y^{'}+x(p_2(x,y)-p_2(x,y^{'}))/(y-y^{'}))$ which is not reduced if $k\geq 2$. Since $\tilde{g}$ is finitely determined, by Theorem \ref{criterio} and \cite[Theorem 2.4]{ref9} we have that $k=1$. In this case, $D^2(\tilde{g})=V(x,x^{'},y+y^{'})\subset \mathbb{C}^2 \times \mathbb{C}^2$ is a smooth curve and $\tilde{g}$ does not have any triple points. It follows by \cite[Th. 2.14]{ref7} that $\tilde{g}$ is stable. Hence, $\tilde{g}$ is $\mathcal{A}$-equivalent to $f(x,y)=(x,y^2,xy)$, which is considered with quasi-homogeneous type $(1,2,2;1,1)$ and has the desired properties, that is, $w(y^2)\leq w(xy)$.\\

Now, the analysis of case (3) is analogous.\end{proof}

\begin{remark}
Some versions of Lemma \rm\ref{lemma corank 1} \textit{are well know by specialists (see for instance} \rm\cite[\textit{Lemma 4.1}]{mond7}\textit{). We include its proof for completeness. Given a quasi-homogeneous, corank $1$, finitely determined map germ, we will assume in the proofs throughout this paper that $f$ is written in the normal form in} \rm(\ref{eq12}), \textit{presented in Normal form lemma}.
\end{remark}

In \cite{mondformulas}, Mond showed that if $f$ is quasi-homogeneous then the invariants $C(f)$, $T(f)$ and also $\mu(D(f))$ are determined by the weights and the degrees of $f$. More precisely, he showed the following result.

\begin{theorem}\label{mondformulas} \rm({\cite{mondformulas}}) \textit{Let $f:(\mathbb{C}^2,0)\rightarrow (\mathbb{C}^3,0)$ be a quasi-homogeneous finitely determined map germ of type $(d_1,d_2,d_3;a,b)$. Then}

\begin{center}
$C(f)=\dfrac{1}{ab}\bigg((d_2-a)(d_3-b)+(d_1-b)(d_3-b)+(d_1-a)(d_2-a)\bigg)$,
\end{center}

\begin{center}
$T(f)=\dfrac{1}{6ab}(\delta-\epsilon)(\delta-2\epsilon)+\dfrac{C(f)}{3} \ \ \ \ \ $ and $ \ \ \ \ \ \mu(D(f))=\dfrac{1}{ab}(\delta-\epsilon-a)(\delta-\epsilon-b)$.
\end{center}

\noindent \textit{where $\epsilon = d_{1}+d_{2}+d_{3}-a-b$ and $\delta=d_{1}d_{2}d_{3}/(ab)$.}

\end{theorem}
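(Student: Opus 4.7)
The plan is to prove each of the three formulas as the length of a suitable quasi-homogeneous Artinian algebra attached to $f$, and then apply the standard Bezout-type formula for the colength of a zero-dimensional quasi-homogeneous complete intersection in $\mathcal{O}_2$ (or $\mathcal{O}_n$ in the general corank case), namely that if $g_1, g_2$ form a regular sequence and are quasi-homogeneous of weighted degrees $e_1, e_2$ with variable weights $(a,b)$, then $\dim_{\mathbb{C}} \mathcal{O}_2/(g_1,g_2) = (e_1-a)(e_2-b)/(ab)$ (and analogously in more variables). By the Normal Form Lemma, in the corank $1$ case I may assume $f(x,y) = (x, y^n + xp(x,y), \alpha y^m + xq(x,y))$, which will make the weighted degrees of all the relevant generators transparent.

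For $C(f)$, I would use the standard description of the cross-cap locus of a corank-$1$ germ $f=(x,Y,Z)$ as $V(Y_y, Z_y)$; finite determinacy forces this to be a $0$-dimensional complete intersection, and since $Y_y$ and $Z_y$ are quasi-homogeneous of weighted degrees $d_2-b$ and $d_3-b$, the length formula gives $C(f) = (d_2-b)(d_3-b)/(ab)$. A short algebraic verification shows this matches the symmetric expression in the statement once one substitutes $d_1=a$; for general corank the argument is parallel using the appropriate ramification ideal. For $\mu(D(f))$, I would use that $D(f) \subset (\mathbb{C}^2,0)$ is, for finitely determined $f$, a reduced quasi-homogeneous plane curve (by Theorem \ref{criterio} and the quasi-homogeneity of the divided differences defining $D^2(f)$). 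Once I compute the weighted degree $d$ of a reduced defining equation for $D(f)$, the Milnor formula for quasi-homogeneous plane curves gives $\mu(D(f)) = (d-a)(d-b)/(ab)$, and the claim reduces to showing $d = \delta - \epsilon$ with $\delta = d_1 d_2 d_3/(ab)$ and $\epsilon = d_1+d_2+d_3-a-b$. This identification follows by describing $D(f)$ via the Fitting ideal $F_0(\pi_*\mathcal{O}_{D^2(f)})$ and using the fact that $D^2(f)$ is a quasi-homogeneous complete intersection in $\mathbb{C}^4$ of degrees $a,\,d_2-b,\,d_3-b$, so the projection formula gives the degree of the resulting determinantal generator on $(x,y)$.

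For $T(f)$, I would work with the triple-point space $D^3(f) \subset \mathbb{C}^6$, which is again a quasi-homogeneous complete intersection when $f$ is finitely determined. The number of triple points in a stabilization equals $\frac{1}{6}\operatorname{length}(\mathcal{O}_{D^3(f)})$ away from cross-caps, where the cross-cap contribution is corrected by the $C(f)/3$ term. Computing the length via the quasi-homogeneous complete-intersection formula in six variables and simplifying yields $(\delta-\epsilon)(\delta-2\epsilon)/(6ab) + C(f)/3$. The main obstacle throughout is the computation of the weighted degree $\delta - \epsilon$ of the defining equation of $D(f)$: it requires passing from the explicit divided-difference presentation of $D^2(f)$ to its image under the first Fitting ideal, and tracking weights through that elimination. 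Once this degree is pinned down, the formulas for $\mu(D(f))$ and $T(f)$ follow from routine (if lengthy) applications of the quasi-homogeneous length formulas, and $C(f)$ is the easiest of the three.
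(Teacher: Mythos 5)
This theorem is not proved in the paper at all: it is quoted verbatim from Mond's article \cite{mondformulas}, so there is no internal argument to compare yours against. That said, your sketch follows the route of Mond's original proof (and of the Marar--Mond multiple-point-space machinery): realize $C(f)$, $T(f)$ and $\mu(D(f))$ as lengths of quasi-homogeneous zero-dimensional algebras, or as Milnor numbers of quasi-homogeneous plane curves, and evaluate everything by weighted B\'ezout. The specializations you check are right: with $d_1=a$ the displayed expression for $C(f)$ does collapse to $(d_2-b)(d_3-b)/(ab)$, which is the colength of $(Y_y,Z_y)$, and $\delta-\epsilon$ is exactly the weighted degree $\tfrac{d_2d_3}{b}-d_2-d_3+b$ of the reduced equation $\lambda$ of $D(f)$ that the paper itself quotes as \cite[Prop. 1.15]{mondformulas} in the proof of Lemma \ref{lemma aux4}.

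Three points need repair. First, your ``standard B\'ezout-type formula'' is misstated: for a quasi-homogeneous regular sequence $g_1,g_2$ of weighted degrees $e_1,e_2$ one has $\dim_{\mathbb{C}}\mathcal{O}_2/(g_1,g_2)=e_1e_2/(ab)$, not $(e_1-a)(e_2-b)/(ab)$; the latter is the Milnor-number formula for a single quasi-homogeneous function of degree $e$, i.e.\ the former applied to its partial derivatives, which have degrees $e-a$ and $e-b$. Your subsequent applications silently use the correct version, so this is a slip rather than a fatal error, but the lemma as you state it is false. Second, the genuine mathematical content you have not supplied is precisely the step you flag as ``the main obstacle'': proving that the generator of $F_0(\pi_{\ast}\mathcal{O}_{D^2(f)})$ has weighted degree $\delta-\epsilon$. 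Everything else is bookkeeping, but this elimination is where the work lies, and asserting that ``the projection formula gives the degree of the resulting determinantal generator'' is not a proof. Third, your reading of the $C(f)/3$ term as a cross-cap correction is backwards: by Marar--Mond one has $T(f)=\tfrac{1}{6}\dim_{\mathbb{C}}\mathcal{O}_{D^3(f)}$ on the nose (cross-caps contribute nothing to $D^3$), and the $+C(f)/3$ appears only because rewriting that length in terms of $\delta$ and $\epsilon$ produces a $-C(f)/3$ that must be cancelled. Finally, note that for corank $\geq 2$ the ramification and triple-point loci are determinantal rather than complete intersections, so ``the argument is parallel'' there requires Thom--Porteous-type degree formulas; this does not affect the corank $1$ situation the paper actually uses.
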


\section{Formulas for the invariants $J$ and $m(f(D(f)))$}

$ \ \ \ \ $ We note that by a parametrization of an irreducible complex germ of curve $(X,0) \subset (\mathbb{C}^n,0)$ we mean a primitive parametrization, that is, a holomorphic and generically $1$-to-$1$ map germ $n$ from $(\mathbb{C},0)$ to $(\mathbb{C}^n,0)$, such that $n(W,0)\subset (X,0)$ (see for instance \cite[Section 3.1]{greuel16}). Before we present our main result, we will need the following lemma.

\begin{lemma}\label{lemma aux4} Let $f$ be a finitely determined, corank $1$, quasi-homogeneous map germ of type $(d_1,d_2,d_3;a,b)$. Write $f$ as in Lemma \ref{lemma corank 1}, that is, $f(x,y)=(x, y^n+xp(x,y), \alpha y^m+ xq(x,y))$, with $d_2\leq d_3$. Then\\ 

\noindent (a) If $V(y)$ is an irreducible component of $D(f)$, then $V(y)\in IC(D(f))$ and $a=1$.\\

\noindent (b) $D(f)=V(\lambda(x,y))$, where $\lambda(x,y)$ is a quasi-homogeneous polynomial of type $\left(\dfrac{d_2d_3}{b}-d_2-d_3+b;a,b\right)$ and

\begin{equation}\label{eq3}
\lambda(x,y)=\displaystyle { x^{s}\prod_{i=1}^{r}}(y^a-\alpha_i x^b),
\end{equation}

\noindent where $\alpha_i \in \mathbb{C}$ are all distinct, $r=\dfrac{(d_2-b)(d_3-b)-sab}{ab^2}\geq 0$ and either $s =0$ or $s=1$.\\

\noindent (c) If $a>d_2$, then $p(x,y)=0$. That is, $f(x,y)=(x,y^n,\alpha y^m+xq(x,y))$.\\

\noindent (d) If $s=1$ in \rm(\ref{eq3}), \textit{that is, if $V(x)$ is an irreducible component of $D(f)$, then it is a fold component of $D(f)$.} \\

\noindent \textit{(e) If $\alpha=0$, then $n=2$.}\\

\noindent \textit{(f) $s=0$ if and only if $\alpha \neq 0 $ and $gcd(n,m)=1$. In other words, $s=1$ if and only if either $\alpha =0$, or $\alpha \neq 0$ and $gcd(n,m)=2$.}

\end{lemma}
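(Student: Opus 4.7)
My plan is to prove part $(b)$ first and then derive the remaining items from the factorization in $(b)$ together with the finite-determinacy criterion (Theorem~\ref{criterio}). For $(b)$, I would exhibit $D^{2}(f)\subset V(x-x')$ as the complete intersection defined by the two divided differences $\alpha_{1}(x,y,y')=\tfrac{y^{n}-(y')^{n}}{y-y'}+x\tfrac{p(x,y)-p(x,y')}{y-y'}$ and $\alpha_{2}(x,y,y')=\alpha\tfrac{y^{m}-(y')^{m}}{y-y'}+x\tfrac{q(x,y)-q(x,y')}{y-y'}$, each quasi-homogeneous in $(x,y,y')$ with weights $(a,b,b)$ and weighted degrees $d_{2}-b$ and $d_{3}-b$. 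Since $\alpha_{1}$ is monic in $y'$ of degree $n-1$, the pushforward $\pi_{\ast}\mathcal{O}_{D^{2}(f)}$ is presented as the cokernel of multiplication by $\alpha_{2}$ on the free $\mathcal{O}_{2}$-module $\mathcal{O}_{2}[y']/(\alpha_{1})$, and its $0$-th Fitting ideal is therefore generated by $\lambda(x,y)=\mathrm{Res}_{y'}(\alpha_{1},\alpha_{2})$, quasi-homogeneous of weighted degree $(d_{2}-b)(d_{3}-b)/b=d_{2}d_{3}/b-d_{2}-d_{3}+b$. Any quasi-homogeneous polynomial of this type in two variables factors, up to a constant, as $x^{s'}y^{t'}\prod_{i}(y^{a}-\beta_{i}x^{b})^{e_{i}}$ with pairwise distinct non-zero $\beta_{i}$; finite determinacy makes $\lambda$ reduced (Theorem~\ref{criterio}), so all $e_{i}=1$ and $s',t'\in\{0,1\}$, and $t'=1$ with $a\geq 2$ is incompatible with reducedness because the resulting factor $y^{a}$ would be non-reduced. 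When $a=1$ the factor $y$ is absorbed as $y-0\cdot x^{b}$. Writing $s=s'$ and equating $sa+rab$ to the weighted degree yields the stated $r$.

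Items $(a)$, $(c)$, $(d)$, $(e)$ are short corollaries. For $(c)$, quasi-homogeneity of the second component gives $w(p)=d_{2}-a$; if $a>d_{2}$ this is negative, so $p\equiv 0$. For $(a)$, direct computation gives $f|_{V(y)}(x)=(x,xp(x,0),\alpha xq(x,0))=(x,0,0)$ because $p(x,0)=q(x,0)=0$, so $V(y)\in IC(D(f))$; and $V(y)\subset V(\lambda)=D(f)$ forces some $\beta_{i_{0}}=0$ in the factorization, yielding the factor $y^{a}$, whence reducedness gives $a=1$. For $(d)$, if $V(x)$ were an identification component, a paired component $D(f)^{i}\neq V(x)$ would satisfy $f(D(f)^{i})\subset\{X=0\}$, forcing $x$ to vanish on $D(f)^{i}$ and hence $D(f)^{i}\subseteq V(x)$; irreducibility gives the contradiction $D(f)^{i}=V(x)$, so $V(x)$ is a fold component. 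For $(e)$, when $\alpha=0$ the generator $\alpha_{2}$ has $x$ as a factor, and multilinearity of the resultant in its second argument gives $\lambda=x^{n-1}\mu$ for some $\mu$; reducedness of $\lambda$ then forces $n-1\leq 1$, that is $n=2$.

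For $(f)$ the problem reduces to deciding when $s=1$, i.e., when $V(x)\subset D(f)$. Setting $x=x'=0$ in $\alpha_{1},\alpha_{2}$ gives $\sum_{i=0}^{n-1}y^{n-1-i}(y')^{i}=0$ and $\alpha\sum_{i=0}^{m-1}y^{m-1-i}(y')^{i}=0$, and for generic $y\neq 0$ a solution with $y'\neq y$ exists exactly when $\alpha=0$ or when $\alpha\neq 0$ and $\gcd(n,m)\geq 2$ (take $y'=\zeta y$ with $\zeta^{\gcd(n,m)}=1$ and $\zeta\neq 1$). This yields the implication ``$s=0$ $\Rightarrow$ $\alpha\neq 0$ and $\gcd(n,m)=1$''. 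For the converse I also need to exclude $\gcd(n,m)\geq 3$ with $\alpha\neq 0$: in that case the $\gcd(n,m)-1\geq 2$ distinct off-diagonal branches $\{y'=\zeta y\}$ all project onto $V(x)$, so $x$ divides $\lambda$ with multiplicity at least two, contradicting reducedness. The main obstacle I expect is precisely this last multiplicity count and its compatibility with the Fitting-ideal structure of $D(f)$; everything else is weight bookkeeping or direct inspection of the defining equations.
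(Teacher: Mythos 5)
Your overall architecture is sound, and several steps are attractive alternatives to the paper's: you compute $\lambda$ directly as $\mathrm{Res}_{y'}(\alpha_1,\alpha_2)$ via the presentation of $\pi_{*}\mathcal{O}_{D^2(f)}$ instead of quoting Mond's result on the quasi-homogeneous type of $D(f)$; you get (e) from multilinearity of the resultant rather than from the $n$-to-$1$ restriction to $V(x)$; and your exclusion of $\gcd(n,m)\geq 3$ in (f) by counting the off-diagonal branches of $D^2(f)$ lying over $V(x)$ is a cleaner justification than the paper gives. However, there is a genuine gap at the one place where the real content of (a) lies, namely the deduction $a=1$, and the same error propagates into the exact form of the factorization in (b).

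You argue that $V(y)\subset D(f)$ ``yields the factor $y^{a}$, whence reducedness gives $a=1$,'' and earlier that ``$t'=1$ with $a\geq 2$ is incompatible with reducedness because the resulting factor $y^{a}$ would be non-reduced.'' This is not correct: the irreducible quasi-homogeneous polynomials for weights $(a,b)$ are $x$, $y$ and $y^{a}-\beta x^{b}$ with $\beta\neq 0$, so if $y$ divides the reduced polynomial $\lambda$, the factor that actually appears is $y$ to the first power, not $y^{a}$. For instance $y(y^{2}-x^{3})$ is a perfectly reduced quasi-homogeneous polynomial for $(a,b)=(2,3)$, so reducedness alone imposes no constraint on $a$. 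Consequently the normal form $x^{s}\prod_{i}(y^{a}-\alpha_{i}x^{b})$, the absorption of $y$ as $y-0\cdot x^{b}$, and the count $r=\bigl((d_{2}-b)(d_{3}-b)-sab\bigr)/(ab^{2})$ are only justified once $a=1$ is known, and you have not actually proved $a=1$. The missing ingredient is the pairing of identification components, which you set up but never use: since $f|_{V(y)}$ is generically $1$-to-$1$ with image the $X$-axis, $V(y)$ must be paired with a \emph{different} component having the same image; that partner cannot be $V(x)$ (whose image lies in $\{X=0\}$ and meets the $X$-axis only at the origin), so it is some $V(y^{a}-\alpha_{j}x^{b})$ with $\alpha_{j}\neq 0$, whose image is parametrized by $u\mapsto(u^{a},\gamma_{1,j}u^{d_{2}},\gamma_{2,j}u^{d_{3}})$. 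Equality of images forces $\gamma_{1,j}=\gamma_{2,j}=0$, and since this composite is a primitive parametrization followed by a generically $1$-to-$1$ restriction of $f$, the map $u\mapsto u^{a}$ must be generically injective, i.e.\ $a=1$. With that argument inserted, the rest of your proof goes through.
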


\begin{proof} Consider a representative $f:U\rightarrow V$ of $f$.\\

((a) and (b)) Suppose that $V(y)$ is an irreducible component of $D(f)$. Consider the parametrization of $V(y)$ given by the map $\varphi_0:W\rightarrow U$, defined by $\varphi_0(u)=(u,0)$. So, $f\circ \varphi_0:W\rightarrow V$, defined as

\begin{equation}\label{eq11}
(f\circ \varphi_0)(u)=(u,0,0),
\end{equation}
 
\noindent is a parametrization of $f(V(y))$. Since $f\circ \varphi_0$ is $1$-to-$1$, $V(y)$ is an identification component of $D(f)$. Since $f$ is quasi-homogeneous and finitely determined, we have that $\lambda(x,y)$ is a quasi-homogeneous polynomial of type $\left(\dfrac{d_2d_3}{b}-d_2-d_3+b;a,b\right)$, by \cite[Prop. 1.15]{mondformulas}. 

The only irreducible quasi-homogeneous polynomials with $w(x)=a$ and $w(y)=b$ in the ring of polynomials $\mathbb{C}[x,y]$ are $x,y$ and $y^a-\alpha_i x^b$, with $\alpha_i \in \mathbb{C}$ and $\alpha_i \neq 0$. Since the ring of polynomials $\mathbb{C}[x,y]$ is an unique factorization domain, each irreducible factor of $\lambda$ is on the form of $x,y$ or $y^a-\alpha_i x^b$. By Theorem \ref{criterio}, $\lambda$ is reduced, hence the irreducible factors of $\lambda$ are all distinct. So, $\lambda$ can take the following form:

\begin{equation}\label{eq8}
\lambda(x,y)=\displaystyle { x^{s}y^{l}\prod_{i=1}^{r^{'}}}(y^a-\alpha_i x^b),
\end{equation}

\noindent where $s,l \in \lbrace 0,1 \rbrace$, $r^{'}\geq 0$, $\alpha_i$ are all distinct and $\alpha_i \neq 0$ for all $i$. We note that if $r^{'}=0$, then $\displaystyle { \prod_{i=1}^{0}}(y^a-\alpha_i x^b)=1$ (the empty product).

Consider the parametrization of $V(y^a-\alpha_i x^b)$ given by the map $\varphi_{\alpha_i}:W\rightarrow U$, defined by $\varphi_{\alpha_i}(u)=(u^a,\gamma_i u^b)$, where $\gamma_i=\alpha_i^{1/a}$. So, $f\circ \varphi_{\alpha_i}:W\rightarrow V$, defined as

\begin{equation}\label{eq10}
(f\circ \varphi_{\alpha_i})(u)=(u^a,\gamma_{1,i}u^{d_2},\gamma_{2,i}u^{d_3}), 
\end{equation}

\noindent is a parametrization of $f(V(y^a-\alpha_i x^b))$, for some $\gamma_{1,i},\gamma_{2,i} \in \mathbb{C}$. 

Since $f(V(x))\cap f(V(y))=\lbrace (0,0,0) \rbrace$, the associated identification component of $V(y)$ is a curve $V(y^a-\alpha_j   x^b)$ for some $\alpha_j \neq 0$. Since, $(V(y),V(y^a-\alpha_j x^b))$ is a pair of identification components of $D(f)$, comparing (\ref{eq11}) and (\ref{eq10}) we see that $\gamma_{1,j}=\gamma_{2,j}=0$ and $a=1$. Consequently, the expression (\ref{eq8}) can be rewritten as follows:

\begin{center}
$\lambda(x,y)=\displaystyle { x^{s}\prod_{i=1}^{r}}(y^a-\alpha_i x^b)$,
\end{center}

\noindent where $s \in \lbrace 0,1 \rbrace$, $r=\dfrac{(d_2-b)(d_3-b)-sab}{ab^2}\geq 0$, $\alpha_i$ are all distinct and we allow one of the $\alpha_i's$ to be zero.\\

(c) Suppose that $p(x,y)\neq 0$. By assumption of the normal form of $f$, we have that $p(x,0)=0$, hence $p(x,y)$ is not a constant. If $a>d_2=bn$, then $w(xp(x,y))>a>d_2$,  a contradiction.\\

(d) Note that for all $i$, $V(x)$ and $V(y^a-\alpha_i x^b)$ have distinct images. Hence, $V(x)\notin IC(D(f))$. So if $V(x) \subset D(f)$, then we conclude that $V(x) \in FC(D(f))$.\\ 

(e) Suppose that $\alpha =0$, then restriction of $f$ to $V(x)$ is $n$-to-$1$. Since $f$ is finitely determined, either $n=1$ or $n=2$. Since $f$ has corank $1$, we conclude that $n=2$.\\ 

(f) By (d), we have that $V(x)$ is an irreducible component of $D(f)$ if and only if $f\circ \varphi$ is generically $2$-to-$1$, where $\varphi: W\rightarrow U$ is the parametrization of $V(x)$, defined by $\varphi(u)=(0,u)$. Suppose that $V(x) \subset D(f)$. This implies that either $\alpha=0$ or $\alpha \neq 0$ and $gcd(n,m)=2$. On the other hand, if $\alpha =0$ then by (e) we have that $n=2$ and $f\circ \varphi$ is generically $2$-to-$1$. Hence, in this case $V(x) \subset D(f)$. If $\alpha \neq 0$ and $gcd(n,m)=2$, then again $f\circ \varphi$ is generically $2$-to-$1$ and hence $V(x) \in FC(D(f))$. 

Now, suppose that $V(x)$ is not an irreducible component of $D(f)$. Since $f$ is generically $1$-to-$1$, by (d) we have that $V(x)\not\subset D(f)$ if and only if $f\circ \varphi$ is generically $1$-to-$1$ if and only if $\alpha \neq 0$ and $gcd(n,m)=1$.\end{proof}\\

In the following result, $m(f(D(f)))$ denotes the Hilbert-Samuel multiplicity of the maximal ideal of the local ring $\mathcal{O}_{f(D(f))}$ of $(f(D(f)),0)$ (or equivalently, the multiplicity of $f(D(f))$ at $0$). Also, $J$ denotes the number of tacnodes that appears in a stabilization of the transversal slice curve of $f(\mathbb{C}^2)$ (see \cite[Def. 3.7]{slice}).

Note that if $\varphi:W\subset \mathbb{C}\rightarrow V \subset \mathbb{C}^3$, $\varphi(u)=(u^m,\varphi_2(u),\varphi_3(u))$ is a Puiseux parametrization of a reduced curve in $\mathbb{C}^3$, then its multiplicity is $m$ (see for instance \cite[page 98]{chirka}). We remark that given a germ of reduced curve $(C,0)\subset (\mathbb{C}^n,0)$, it is not true that its irreducible components are also reduced, see for instance \cite[Example 4.12]{otoniel6} where $(X_0,0)=(X_0^1 \cup X_0^2,0)$ is a germ of reduced curve in $(\mathbb{C}^3,0)$, but $(X_0^1,0)$ is not reduced at $0$.

Suppose that $f:(\mathbb{C}^2,0)\rightarrow (\mathbb{C}^3,0)$ is finitely determined. So, $D(f)$ is a reduced curve, by Theorem \ref{criterio}. It follows by \cite[Th. 4.3]{ref9} that $f(D(f))$ is also a reduced curve. However, given an irreducible component $f(D(f)^i)$ of $f(D(f))$, it may contain a (embedded) zero dimensional component, and therefore may not be reduced. If this is the case, we say that $f(D(f)^i)$ is a generically reduced curve. Recently, the author and Snoussi showed in \cite[Lemma 4.8]{otoniel5} that if $(C,0)$ is a germ of generically reduced curve and $(|C|,0)$ is its associated reduced curve, then the multiplicities of $(C,0)$ and $(|C|,0)$ at $0$ are equal. Hence, we also can calculate the multiplicity of $f(D(f)^i)$ considering its reduced structure and using a corresponding Puiseux parametrization for it. We are now able to present our main result.

\begin{theorem}\label{main theo} Let $f$ be a finitely determined, corank $1$, quasi-homogeneous map germ. Write $f$ as in Lemma \ref{lemma corank 1}, that is, $f(x,y)=(x, y^n+xp(x,y), \alpha y^m+ xq(x,y))$ and it is of type $(d_1=a,d_2,d_3;a,b)$ such that $d_2\leq d_3$. Then

\begin{center}
$m(f(D(f))=\dfrac{1}{2ab^2}\bigg((d_2-b)(d_3-b)c+s a b (d_2-c) \bigg)$ $ \ \ \ \ $ and\\

$J=\dfrac{1}{2ab^2}\bigg((d_2-b)(d_3-b)(c-3b)+b(\delta-\epsilon-a)(\delta-\epsilon-b)+b(\epsilon-\delta)(\delta-2\epsilon)+sab(d_2-c)-ab^2\bigg)$
 \end{center} 

\noindent where $\epsilon = d_{2}+d_{3}-b$, $\delta=d_{2}d_{3}/b$, $c=min\lbrace a,d_2 \rbrace$ and 

\[ s =   \left\{
\begin{array}{ll}
      0 & if \ \alpha \neq 0 \ and \ gcd(n,m)=1, \\
      1 & otherwise.     
\end{array} 
\right. \]

\end{theorem}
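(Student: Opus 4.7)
The plan is to prove the multiplicity formula for $m(f(D(f)))$ first, then derive the $J$ formula by purely algebraic substitution. Indeed, the identity $J = \tfrac{1}{2}(\mu(D(f))-C(f)-1) - 3T(f) + m(f(D(f)))$ recorded in the introduction, together with Mond's expressions for $\mu(D(f))$, $C(f)$, and $T(f)$ from Theorem \ref{mondformulas}, turn any formula for $m(f(D(f)))$ directly into a formula for $J$; simplification of this substitution is routine.

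To compute $m(f(D(f)))$, I would begin from Lemma \ref{lemma aux4}(b), which writes $D(f) = V(\lambda)$ with $\lambda(x,y) = x^s\prod_{i=1}^{r}(y^a - \alpha_i x^b)$ and $r = ((d_2-b)(d_3-b) - sab)/(ab^2)$. The irreducible components of $D(f)$ are then the $r$ curves $V(y^a - \alpha_i x^b)$, together with $V(x)$ in the case $s=1$; by Lemma \ref{lemma aux4}(d) the component $V(x)$ is always a fold component, while the $r$ remaining components will organize themselves into $r/2$ pairs of identification components.

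Next, I would compute the multiplicity of the image of each component. For $V(y^a - \alpha_i x^b)$, take the primitive parametrization $\varphi_i(u) = (u^a, \gamma_i u^b)$ (primitive because $\gcd(a,b)=1$); quasi-homogeneity forces $f \circ \varphi_i(u) = (u^a, A_i u^{d_2}, B_i u^{d_3})$ for suitable constants $A_i, B_i$. A case analysis — using Lemma \ref{lemma aux4}(c) to rule out $p \not\equiv 0$ when $a>d_2$ (so that $A_i = \gamma_i^n \neq 0$), together with \cite[Lemma 4.8]{otoniel5} to replace generically reduced components by their reductions without altering the multiplicity — shows that each such image has multiplicity $c = \min\{a, d_2\}$. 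For the fold $V(x)$ (when $s=1$), parts (e) and (f) of Lemma \ref{lemma aux4} force $n$ to be even, so the parametrization $u \mapsto (0, u^n, \alpha u^m)$ factors through $v = u^2$ and yields image multiplicity $n/2 = d_2/(2b)$.

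Summing the multiplicities over the irreducible components of $f(D(f))$ yields
\[m(f(D(f))) = \frac{r}{2}\,c + s\cdot\frac{d_2}{2b},\]
and substituting $r = ((d_2-b)(d_3-b) - sab)/(ab^2)$ rewrites this as the expression claimed. The main obstacle will be the case analysis behind the identification pairing and the multiplicity computation: verifying that the $r$ components $V(y^a - \alpha_i x^b)$ really do pair off into identification pairs with a common image of multiplicity exactly $c$, and handling the degenerate subcases where $a=1$ and some $\alpha_i = 0$ (so that $V(y)$ appears as one of the factors, forcing its partner $V(y - \alpha_j x)$ to satisfy $A_j = B_j = 0$) or where the constants $A_i, B_i$ coming from the parametrization happen to vanish.
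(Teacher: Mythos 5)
Your route is the same as the paper's: read off the decomposition of $D(f)$ from Lemma \ref{lemma aux4}, sum the multiplicities of the images of its branches to get $m(f(D(f)))$, and then obtain $J$ by substituting Mond's formulas into the slice identity $J=\tfrac{1}{2}(\mu(D(f))-C(f)-1)-3T(f)+m(f(D(f)))$. The $J$ step and the treatment of $V(x)$ are fine. There is, however, one genuine gap: the assertion that the $r$ components $V(y^a-\alpha_i x^b)$ ``organize themselves into $r/2$ pairs of identification components.'' This is false in general. Such a component can itself be a fold component, i.e.\ $f$ restricted to it can be generically $2$-to-$1$, and $r$ need not even be even. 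For the $F_4$ singularity $f(x,y)=(x,y^2,y^5+x^3y)$, of type $(4,6,15;4,3)$, one has $s=0$ and $r=1$, and the unique component $V(y^4+x^3)$ is a fold: with $\gamma^4=-1$ its image is traced by $u\mapsto(u^4,\gamma^2u^6,(\gamma^5+\gamma)u^{15})=(u^4,\gamma^2u^6,0)$, which is $2$-to-$1$ onto a branch of multiplicity $2$, not a half of an identification pair. Likewise for $B_k$, $C_k$ and $S_k$ with $k$ even all the components $V(y^a-\alpha_ix^b)$ are folds (see Table \ref{tabela1}). As written, your count produces $r/2$ image branches each of multiplicity $c$ and cannot even be formulated when $r$ is odd.

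The repair is to run the identification/fold dichotomy of Definition \ref{typesofcomp} on each $V(y^a-\alpha_i x^b)$ separately. An identification pair contributes a single image branch of multiplicity $c$, i.e.\ $c/2$ per component of $D(f)$. A fold component $V(y^a-\alpha_i x^b)$ has image primitively parametrized by $v\mapsto(v^{a/2},\gamma'_1v^{d_2/2},\gamma'_2v^{d_3/2})$ (the restriction being $2$-to-$1$ forces $f\circ\varphi_{\alpha_i}$ to factor through $u\mapsto u^2$, as in the $F_4$ computation above), so it contributes one branch of multiplicity $c/2$, again using Lemma \ref{lemma aux4}(c) to guarantee $\gamma'_1\neq0$ when $a>d_2$. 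Hence every one of the $r$ components contributes $c/2$ to the total regardless of its type, giving $m(f(D(f)))=\tfrac{r_i}{2}c+(r_f-s)\tfrac{c}{2}+s\tfrac{n}{2}=\tfrac{rc}{2}+s\tfrac{d_2}{2b}$, which is exactly the expression you wrote. So your final formula is correct and the rest of the argument goes through; only this fold case must be added to the multiplicity count.
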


\begin{proof} Take a representative $f:U\rightarrow V$ of $f$. By Lemma \ref{lemma aux4} (b), we have that $D(f)=V(\lambda(x,y))$, where

\begin{center}
$\lambda(x,y)=\displaystyle { x^{s}\prod_{i=1}^{r}}(y^a-\alpha_i x^b)$,
\end{center}

\noindent $s=0$ or $1$, $\alpha_i \in \mathbb{C}$ are all distinct and $r=\dfrac{(d_2-b)(d_3-b)-sab}{ab^2}$. 

Set $\mathscr{C}_{\alpha_i}:=V(y^a-\alpha_i x^b)$. As in the proof of Lemma \ref{lemma aux4}, consider a parametrization $\varphi_{\alpha_i}: W \rightarrow U $ of  $\mathscr{C}_{\alpha_i}$ defined by $\varphi_{\alpha_i}(u)=(u^a,\gamma_i u^b)$, where $W$ is an open neighbourhood of $0$ in $\mathbb{C}$ and $\gamma_i:=\alpha_i^{1/a}$. So, if $\mathscr{C}_{\alpha_i}$ is an identification component of $D(f)$, then the mapping $\tilde{\varphi}_{\alpha_i}:= f\circ \varphi_{\alpha_i}: W \rightarrow V$, defined by 

\begin{equation}\label{eq1}
\tilde{\varphi}_{\alpha_i}:=(u^a,\gamma_{1,i} u^{d_2}, \gamma_{2,i} u^{d_3}),
\end{equation}

\noindent is a parametrization of $f(\mathscr{C}_{\alpha_i})$, for some $\gamma_{1,i}, \gamma_{2,i} \in \mathbb{C}$. On the other hand, if $\mathscr{C}_{\alpha_i}$ is a fold component of $D(f)$, then the mapping $\varphi^{'}_{\alpha_i}: W \rightarrow V$, defined by 

\begin{equation}\label{eq2}
\varphi^{'}_{\alpha_i}(u):=(u^{a/2},\gamma_{1,i}^{'} u^{{d_2}/2}, \gamma_{2,i}^{'} u^{{d_3}/2}),
\end{equation}

\noindent is a parametrization of $f(\mathscr{C}_{\alpha_i})$, for some $\gamma_{1,i}^{'}, \gamma_{2,i}^{'} \in \mathbb{C}$. Set $c:=min\lbrace a, d_2 \rbrace$. Note that if $a>d_2$, then $\gamma_{1,i},\gamma_{1,i}^{'}\neq 0$, by Lemma \ref{lemma aux4} (c). It follows by (\ref{eq1}) and (\ref{eq2}) that

\[ m(f(\mathscr{C}_{\alpha_i})) =   \left\{
\begin{array}{ll}
      c & if \ \mathscr{C}_{\alpha_i} \in IC(D(f)),  \\
      c/2 & if \ \mathscr{C}_{\alpha_i} \in FC(D(f)).  \\
  \end{array} 
\right. \]

Set $\mathscr{C}:=V(x)$. If $\mathscr{C} \subset D(f)$, then by Lemma \ref{lemma aux4} (d) we have that it is a fold component of $D(f)$. In this case, the map $\varphi :W\rightarrow V$ defined by

\begin{equation}\label{eq4}
\varphi(u)=(0,u^{n/2},\alpha u^{m/2})
\end{equation}

\noindent is a parametrization of $\mathscr{C}$. It follows by (\ref{eq4}) that $m(\mathscr{C})=n/2$. Hence, we have that

\begin{center}
$m(f(D(f)))=\bigg(\dfrac{r_i(D(f))}{2}\bigg)c+(r_f(D(f))-s)\bigg(\dfrac{c}{2}\bigg)+s \bigg(\dfrac{n}{2}\bigg)$.
\end{center}

By Lemma \ref{lemma aux4} (b), we have that $r_i(D(f))+r_f(D(f))-s=r=\dfrac{(d_2-b)(d_3-b)-sab}{ab^2}$. Also, note that $n=d_2/b$. Hence,

\begin{equation}\label{eq6}
m(f(D(f)))=\bigg(\dfrac{(d_2-b)(d_3-b)-sab}{ab^2}\bigg)\bigg(\dfrac{c}{2}\bigg)+s \bigg(\dfrac{d_2}{2b}\bigg)=\dfrac{1}{2ab^2}\bigg((d_2-b)(d_3-b)c+s a b (d_2-c) \bigg).
\end{equation}

It follows by Propositions 3.5 and 3.10 and Corollary 4.4 of \cite{slice} that  

\begin{equation}\label{eq7}
J=\dfrac{1}{2}\bigg(\mu(D(f))-C(f)-1\bigg)-3T(f)+m(f(D(f)).
\end{equation}

Now, by Theorem \ref{mondformulas} and the expressions (\ref{eq6}) and (\ref{eq7}), we have that

\begin{center}
$J=\dfrac{1}{2ab}\bigg((\delta-\epsilon-a)(\delta-\epsilon-b)+(\epsilon-\delta)(\delta-2\epsilon)-3(d_2-b)(d_3-b)-ab\bigg)+\dfrac{1}{2ab^2}\bigg((d_2-b)(d_3-b)c+s a b (d_2-c) \bigg)$\\

$=\dfrac{1}{2ab^2}\bigg((d_2-b)(d_3-b)(c-3b)+b(\delta-\epsilon-a)(\delta-\epsilon-b)+b(\epsilon-\delta)(\delta-2\epsilon)+sab(d_2-c)-ab^2\bigg)$,

\end{center}

\noindent where $\epsilon = d_{2}+d_{3}-b$ and $\delta=d_{2}d_{3}/b$.\end{proof}

\section{Examples}

$ \ \ \ \ $ When we look to the formulas in Theorem \ref{main theo}, we identify four cases. More precisely, we identified four situations depending on the values that $c$ and $s$ assume. In this section, we present examples illustrating these situations.

\begin{example}\label{example2} (a) ($c=a$ and $s=0$) Consider the $F_4$-singularity of Mond's list \rm\cite{mond6}\textit{, given by}

\begin{center}
 $f(x,y)=(x,y^2,y^5+x^3y)$. 
\end{center}
 
\textit{We have that $f$ is quasi-homogeneous of type $(4,6,15; 4,3)$. In this case $c=4$ and $s=0$. By Theorem} \rm\ref{main theo} \textit{we have that $m(f(D(f)))=2$ and $J=3$.}\\

\noindent \textit{(b) ($c=a$ and $s=1$) Consider the map germ}

\begin{center}
$f(x,y)=(x,y^4,y^6+x^5y-5x^3y^3+4xy^5)$,
\end{center}

\noindent \textit{which is quasi-homogeneous of type $(1,4,6; 1,1)$. In this case $c=1$ and $s=1$. Again by Theorem} \rm \ref{main theo} \textit{we have that $m(f(D(f)))=9$ and $J=39$. We remark that $f$ is from} \rm\cite[\textit{Example 5.5}]{otoniel1}\textit{, where finite determinacy is proved.}\\

\noindent \textit{(c) ($c=d_2$ and $s=0$) Consider the $H_2$-singularity of of Mond's list, given by}

\begin{center}
$f(x,y)=(x,y^3,y^5+xy)$,
\end{center}

\noindent \textit{which is quasi-homogeneous of type $(4,3,5; 4,1)$. Using Theorem} \rm\ref{main theo} \textit{we have that $m(f(D(f)))=3$ and $J=2$.}\\

\noindent \textit{(d) ($c=d_2$ and $s=1$) Consider the map germ}

\begin{center}
$f(x,y)=(x,y^2,x^2y-xy^5)$,
\end{center}  

\noindent \textit{which is quasi-homogeneous of type $(4,2,9; 4,1)$. We have that $D(f)=V(x(x-y^4))$ which is reduced. So, by Theorem} \rm\ref{criterio} \textit{we have that $f$ is finitely determined. Using Theorem} \rm\ref{main theo} \textit{we have that $m(f(D(f)))=2$ and $J=4$.}\\

\end{example}

\begin{example} \noindent \textit{Inspired in Example} \rm\ref{example2}\textit{(a) and (c) we finish this work presenting in Table} \rm\ref{tabela1} \textit{values for $m(f(D(f))$ and $J$ for every map germ in Mond's list using the formulas of Theorem} \rm\ref{main theo}. \textit{We also include in Table} \rm\ref{tabela1} \textit{the values of $r_i(D(f))$ and $r_f(D(f))$.}

\begin{table}[!h]
\caption{Quasi-homogeneous map germs in Mond's list \cite{mond6}.}\label{tabela1}
\centering
{\def\arraystretch{1.9}\tabcolsep=4pt 

\begin{tabular}{ c | c | c | M{1cm}| M{1cm} | M{1.4cm} | M{1cm} }

\hline
\rowcolor{lightgray}
Name  &  $f(x,y)=$  &  Quasi-Homogeneous type & {\footnotesize $r_i(D(f))$} & {\footnotesize $r_f(D(f))$}  & {\footnotesize $m(f(D(f)))$}  &  $J$ \\

\hline
Cross-Cap     & $(x,y^2,xy)$ & $(1,2,2;1,1)$ & $0$  & $1$  & $1$ &   $0$    \\
\hline
$S_k$,  $k\geq 1$ odd   & $(x,y^2,y^3+x^{k+1}y)$  & $(1,k+1,\frac{3(k+1)}{2}; 1,\frac{k+1}{2})$ & $2$ & $0$  &    $1$ &  $0$      \\
\hline
$S_k$,  $k\geq 1$ even    & $(x,y^2,y^3+x^{k+1}y)$  & $(2,2k+2,3k+3;2,k+1)$ & $0$  & $1$  &    $1$ & $0$   \\
\hline
$B_k$, $k\geq 3$ odd    & $(x,y^2,y^{2k+1}+x^2y)$  & $(k,2,2k+1;k,1)$ & $2$ & $0$  &    $2$ & $k$      \\
\hline
$B_k$, $k\geq 3$ even   & $(x,y^2,y^{2k+1}+x^2y)$  & $(k,2,2k+1;k,1)$ & $0$ & $2$  &    $2$ & $k$      \\
\hline
$C_k$, $k\geq 3$ odd    & $(x,y^2,xy^3+x^ky)$  & $(1,k-1,\frac{3k-1}{2};1,\frac{k-1}{2})$ & $2$ & $1$  &    $2$  & $2$   \\
\hline   
$C_k$, $k\geq 3$ even   & $(x,y^2,xy^3+x^ky)$ & $(2,2k-2,3k-1;2,k-1)$ & $0$ & $2$  &    $2$  &  $2$      \\
\hline   
$F_4$     & $(x,y^2,y^5+x^3y)$ & $(4,6,15;4,3)$ & $0$ & $1$  & $2$ & $3$      \\
\hline    
$H_k$     & $(x,y^3,y^{3k-1}+xy)$, $k\geq 2$ & $(3k-2,3,3k-1;3k-2,1)$ & $2$ & $0$  &    $3$ &  $2$      \\
\hline
$T_4$     & $(x,y^3+xy,y^4)$ & $(2,3,4;2,1)$ & $2$ & $1$  & $3$  & $3$      \\
\hline  
$P_3$     & $(x,y^3+xy,cy^4+xy^2)^{\ast}$ & $(2,3,4;2,1)$ & $2$ & $1$  &    $3$  & $3$       \\
\hline   

\multicolumn{5}{l}{$\ast \ c\neq 0,1/2,1,3/2$}.

\end{tabular}
}
\end{table}

\end{example}

\begin{flushleft}
\textit{Acknowlegments:} We would like to thank Jawad Snoussi and Guillermo Peñafort-Sanchis for many helpful conversations, suggestions and comments on this work. The author would like to thank CONACyT for the financial support by Fordecyt 265667 and UNAM/DGAPA for support by PAPIIT IN $113817$.
\end{flushleft}

\small


\begin{thebibliography}{9}

\bibitem{chirka} Chirka, E.M.: \textit{Complex Analytic sets}. Kluwer, Dordrecth, 1990.

\bibitem{otoniel6} Giles-Flores, A.; Silva, O.N.; Snoussi, J.: \textit{On Tangency in equisingular families of curves and surfaces}, arXiv:1905.05153v1, to appear in Quart. J. Math. Oxford.

\bibitem{grauert} Grauert, H.; Remmert. R.: \textit{Theory of Stein spaces}. Grundlehren der Math. Wissenschaffen, \textbf{236}, Berlin, Spring Verlag, 1979.

\bibitem{greuel16} Greuel, G.-M.; Lossen, C; Shustin, E.: \textit{Introduction to singularities and deformations}, Springer Monographs in Mathematics. Springer, Berlin, 2007.

\bibitem{ref7} Marar, W.L; Mond, D.: \textit{Multiple point schemes for corank 1 maps.} J. London Math. Soc. \textbf{39}, 553--567, 1989.

\bibitem{ref9} Marar, W.L.; Nu\~{n}o-Ballesteros, J.J.; Pe\~{n}afort-Sanchis, G.: \textit{Double point curves for corank 2 map germs from $\mathbb{C}^2$ to $\mathbb{C}^3$}. \textit{Topology Appl.} \textbf{159}, 526--536, 2012.

\bibitem{slice} Marar, W. L.; Nu\~{n}o-Ballesteros, J.J.: \textit{Slicing corank 1 map germs from $\mathbb{C}^2$ to $\mathbb{C}^3$}. \textit{Quart. J. Math.} \textbf{65}, 1375--1395, 2014.

\bibitem{mather} Mather, J. N.: \textit{Stability of $C^{\infty}$ mappings. VI: The nice dimensions}. Proceedings of Liverpool Singularities-Symposium, I, 207--253, (1969/70). Lecture Notes in Math., \textbf{192}, Springer, Berlin, 1971.

\bibitem{mond7} Mond, D.: \textit{On the classification of germs of maps from $\mathbb{R}^2$ to $\mathbb{R}^3$}. Proc. London Math. Soc. \textbf{50 (3)}, no. 2, 333--369, 1985.

\bibitem{ref13} Mond, D.; Pellikaan, R.: \textit{Fitting ideals and multiple points of analytic mappings.} \textit{In: Algebraic Geometry and Complex Analysis.} Lecture Notes in Math. \textbf{1414} (Springer), 107--161, 1987.

\bibitem{mond6} Mond, D.: \textit{Some remarks on the geometry and classification of germs of map from surfaces to 3-space}. \textit{Topology} \textbf{26}, 361--383, 1987.

\bibitem{mondformulas} Mond, D.: \textit{The number of vanishing cycles for a quasihomogeneous mapping from $\mathbb{C}^2$ to $\mathbb{C}^3$}; Quart. J. Math. Oxford \textbf{42 (2)}, 335--345, 1991.

\bibitem{otoniel1} Ruas, M.A.S.; Silva, O.N.:  \textit{Whitney equisingularity of families of surfaces in $\mathbb{C}^3$}, Math. Proc. Cambridge Philos. Soc. \textbf{166}, no. 2, 353--369, 2019.

\bibitem{otoniel4} Silva, O.N.: \textit{Equimultiplicity of families of map germs from $\mathbb{C}^2$ to $\mathbb{C}^3$}, Bull. Braz. Math. Soc., New Series (2019). https://doi.org/10.1007/s00574-019-00161-0

\bibitem{otoniel3} Silva, O.N.: \textit{Surfaces with non-isolated singularities}, PhD thesis, São Carlos, Universidade de São Paulo, 2017. Accessed 17 December 2019, avalaible on http://www.teses.usp.br/teses/disponiveis/55/55135/tde-10052017-085440/pt-br.php

\bibitem{otoniel5} Silva, O.N.; Snoussi, J.: \textit{Whitney equisingularity in one parameter families of generically reduced curves}, Manuscripta Math. (2019) doi:10.1007/s00229-019-01164-3

\bibitem{surfer} Surfer, https://imaginary.org/program/surfer

\bibitem{wall} Wall, C.T.C.: \textit{Finite determinacy of smooth map germs}. \textit{Bull. London. Math. Soc.} \textbf{13}, 481--539, 1981.

\end{thebibliography}
\end{document}